\definecolor{fgreen}{RGB}{44,144, 14}
\renewenvironment{proof}{{\bfseries Proof.}}{\qed}
\newtheorem{theorem}{Theorem}[section] 
\newtheorem{proposition}[theorem]{Proposition} 
\newtheorem{lemma}[theorem]{Lemma} 
\theoremstyle{definition}
\newtheorem{remark}[theorem]{Remark}
\def\R{\mathbb R}
\def\C{\mathbb C}
\def\H{\mathbb H}
\def\ib{\mathbf {i}}
\def\jb{\mathbf {j}}
\def\kb{\mathbf {k}}
\def\R{\mathbb R}
\newcommand{\SL}{\mathrm{SL}}
\newcommand{\GL}{\mathrm{GL}}
\def\R{\mathbb {R}}
\def\C{\mathbb {C}}
\def\H{\mathbb {H}}
\def\ib{\mathbf {i}}
\def\jb{\mathbf {j}}
\def\GL{\rm GL}
\def\SL{\rm SL}
\newcommand{\PSL}{\mathrm{PSL}}
\begin{document}

	\title[Reversibility of quaternionic Möbius transformations]
	{Algebraic characterization of reversibility in the quaternionic M\"obius group
		%	Algebraic characterization of reversible quaternionic Möbius transformations
	}
	
	\author[K. Gongopadhyay, T.  Lohan and  A. Mukherjee]{Krishnendu Gongopadhyay, Tejbir Lohan and Abhishek Mukherjee}

	\address{Indian Institute of Science Education and Research (IISER) Mohali,
		Knowledge City,  Sector  81, S.A.S. Nagar 140306, Punjab, India}	\email{krishnendu@iisermohali.ac.in \\ 
		\textit{ORCID}: \href{https://orcid.org/0000-0003-4327-0660}{0000-0003-4327-0660}}

	\address{Theoretical Statistics and Mathematics Unit,
		Indian Statistical Institute, Delhi Centre, New Delhi 110016, India}
	\email{tejbirlohan70@gmail.com	\\
		\textit{ORCID}: \href{https://orcid.org/0000-0002-6912-6221}{0000-0002-6912-6221}}

	\address{Kalna College, Kalna, Dist. Burdwan, West Bengal 713409, India}
	\email{abhishekmukherjee@kalnacollege.ac.in \\ 
		\textit{ORCID}: \href{https://orcid.org/0009-0000-9610-8398}{0009-0000-9610-8398}}

	\subjclass[2010]{Primary 51B10, 20E45; Secondary 15A21, 15B33}

	\keywords{Reversibility, reversible elements, reversing symmetries, quaternionic M\"obius transformations, algebraic characterization, conjugacy invariants}

	\date{\today}
	%	\thanks{{\bf Corresponding author}: Tejbir Lohan \\Theoretical Statistics and Mathematics Unit,
		%		Indian Statistical Institute, Delhi Centre, New Delhi 110016, India.   Email: tejbirlohan70@gmail.com}
	%	

	%\tableofcontents
	
	\begin{abstract}
		An element of a group is called \emph{reversible} if it is conjugate to its inverse.
		While reversibility in the quaternionic M\"{o}bius group $\mathrm{PSL}(2,\mathbb{H})$ 
		has traditionally been studied using geometric and dynamical methods, we develop a 
		purely algebraic approach.
		We obtain an explicit, computable criterion for the reversibility of a quaternionic 
		M\"{o}bius transformation, expressed solely in terms of the entries of a matrix representative.
		More precisely, we prove that
		\[
		[A]\in \mathrm{PSL}(2,\mathbb{H}) \text{ is reversible}
		\quad \Longleftrightarrow \quad
		\beta_A^{2}=\delta_A^{2},
		\]
		where $\beta_A$ and $\delta_A$ are real conjugacy invariants associated with a lift 
		$A\in \mathrm{SL}(2,\mathbb{H})$.
		Furthermore, we give a complete characterization of reversing symmetries 
		of reversible elements in $\mathrm{SL}(2,\mathbb{H})$ and $\mathrm{PSL}(2,\mathbb{H})$.
	\end{abstract}

	\maketitle
	
	\section{Introduction}\label{sec-intro}
	
	Reversibility concerns the study of symmetries whose behavior remains invariant under time reversal. 
	The concept can be traced back to the classical works of Birkhoff, Arnol'd, Devaney, and others; see \cite{Bi,AA,De,LR,BR}. 
	It has significant implications across diverse areas of mathematics, including classical dynamics, group theory, geometry, number theory, complex analysis, and representation theory; see \cite{O'Fa} for further details.
	
	An element of a group is called \emph{reversible} if it is conjugate to its inverse. 
	Reversible elements, also known as \emph{real elements}, arise naturally in various contexts in mathematics; see~\cite{OS, Sa, ES, dg, HMT}. 
	They are closely related to \emph{strongly reversible} or \emph{strongly real} elements, which can be expressed as a product of two involutions (elements of order at most two).
	Equivalently, an element is strongly reversible if and only if it is conjugate to its inverse by an involution. 
	The investigation of reversible and strongly reversible elements is a historically rich and active area of research; see the monograph~\cite{OS} for a comprehensive exposition.

	Let $\mathbb{H}$ denote the division algebra of Hamilton's quaternions. 
	For $a\in \mathbb{H}$, let $\mathfrak{R}(a)$ denote the real part of $a$. 
	Let $\mathrm{M}(2,\mathbb{H})$ be the algebra of $2\times 2$ matrices over $\mathbb{H}$. 
	For each matrix in $\mathrm{M}(2,\mathbb{H})$, the quaternionic determinant is defined in~\eqref{eq-def-det-char}. 
	Let $\GL(2,\mathbb{H})$ be the group of invertible matrices in $\mathrm{M}(2,\mathbb{H})$, and let $\SL(2,\mathbb{H})$ be the subgroup of $\GL(2,\mathbb{H})$ consisting of matrices with quaternionic determinant equal to~$1$.
	
	To understand the conformal geometry of the $n$-dimensional sphere, one may realize its conformal automorphisms as linear fractional transformations of $2\times 2$ Vahlen matrices over Clifford numbers; see Ahlfors~\cite{Ah} and Waterman~\cite{Wa}. 
	In the special case $n=4$, the four-dimensional sphere $\mathbb{S}^{4}$ can be identified with the extended quaternionic plane $\widehat{\mathbb{H}}:=\mathbb{H}\cup\{\infty\}$. 
	The group $\SL(2,\mathbb{H})$ acts on $\widehat{\mathbb{H}}$ by conformal diffeomorphisms via quaternionic linear fractional transformations,
	\[
	\begin{pmatrix} a & b \\ c & d \end{pmatrix} : z \longmapsto (az+b)(cz+d)^{-1}.
	\]
	This action extends to the orientation-preserving isometries of the real hyperbolic $5$-space $\mathbf{H}^{5}$, whose boundary at infinity is identified with $\widehat{\mathbb{H}}$. 
	The isometry group is naturally identified with $\PSL(2,\mathbb{H})$, defined as the quotient $\SL(2,\mathbb{H})/\{\pm I_{2}\}$. 
	For $A\in \SL(2,\mathbb{H})$, we denote its image in $\PSL(2,\mathbb{H})$ by $[A]$. 
	Each element $[A]\in \PSL(2,\mathbb{H})$ has precisely two lifts, $A$ and $-A$, in $\SL(2,\mathbb{H})$, and we often identify one of these lifts with the quaternionic M\"obius transformation corresponding to $[A]$. 
	For more details on this quaternionic formalism; see Wilker~\cite{Wi} and Parker--Short~\cite{PS}.
	
	In~\cite{LFS}, L\'avi\v{c}ka et al.\ investigate reversibility in $\PSL(2,\mathbb{H})$ using the dynamics of quaternionic M\"obius transformations. 
	They classify the reversible elements in $\PSL(2,\mathbb{H})$ and prove that every reversible element in $\PSL(2,\mathbb{H})$ is strongly reversible. 
	The classification of reversible elements in $\PSL(2,\mathbb{H})$ is also implicit in the works of Short~\cite{Sh} and Gongopadhyay~\cite{KG2}, where reversible isometries of $n$-dimensional hyperbolic space are classified in the identity component of the isometry group.
	
	Despite the extensive literature on the dynamics and geometry of quaternionic 
	M\"{o}bius transformations, to the best of our knowledge, no result provides an 
	explicit algebraic criterion for classifying reversible transformations.
	Motivated by this, we study reversibility in $\PSL(2,\mathbb{H})$ from a purely 
	algebraic perspective.
	Given a matrix $A \in \SL(2,\mathbb{H})$ representing a quaternionic M\"{o}bius 
	transformation, it is natural to ask how reversibility can be determined solely 
	from its matrix entries.
	While reversibility in $\PSL(2,\mathbb{H})$ has been studied dynamically and 
	geometrically in earlier work, and individual dynamical types of elements have 
	been characterized algebraically (see, for example,~\cite{Ca, Fo, KG, PS}), no 
	purely algebraic criterion for reversibility in terms of matrix invariants appears 
	in the literature.
	Our main contribution is precisely such a criterion, together with an explicit 
	algebraic description of reversing symmetries in both $\SL(2,\mathbb{H})$ and 
	$\PSL(2,\mathbb{H})$.

	We present an algebraic characterization of reversibility in $\PSL(2,\mathbb{H})$ and provide a simpler proof that reversible and strongly reversible elements coincide; see Proposition~\ref{prop-equiv-rev-str}. 
	Our approach characterizes reversibility using conjugacy invariants of matrix representatives in $\SL(2,\mathbb{H})$. 
	More precisely, for $[A]\in \PSL(2,\mathbb{H})$ and a lift $A\in \SL(2,\mathbb{H})$, the conjugacy invariants introduced in Section~\ref{sec-conj-invar} provide an explicit and computable criterion to determine whether $[A]$ is (strongly) reversible. 
	Our main result is the following.

	\begin{theorem}\label{thm-rev-alg-char}
		Let $[A]\in \PSL(2,\mathbb{H})$, and let
		\[
		A=\begin{pmatrix} a & b \\ c & d \end{pmatrix}\in \SL(2,\mathbb{H})
		\]
		be a lift of $[A]$. 
		Then $[A]$ is reversible in $\PSL(2,\mathbb{H})$ if and only if $\beta_A^{2}=\delta_A^{2}$, where $\beta_A$ and $\delta_A$ are the real conjugacy invariants of $A$ given by
		\[
		\beta_A=\mathfrak{R}\big((ad-bc)\bar a+(da-cb)\bar d\big)
		\quad\text{and}\quad
		\delta_A=\mathfrak{R}(a+d).
		\]
	\end{theorem}
	
	To prove Theorem~\ref{thm-rev-alg-char}, we first classify reversible elements in $\PSL(2,\mathbb{H})$ and then apply the conjugacy invariants of quaternionic matrices discussed in Section~\ref{sec-conj-invar}. 
	Our approach differs from those in~\cite{LFS,KG2,Sh}, but it is inspired by the ideas in~\cite{dgl2} for $\PSL(3,\mathbb{H})$, which relate reversibility to the classification of dynamical types of projective automorphisms of the quaternionic projective plane $\mathbb{P}_{\mathbb{H}}^{2}$.

	This approach also yields a clearer picture of reversibility in the linear group $\SL(2,\mathbb{H})$. 
	Unlike $\PSL(2,\mathbb{H})$, the group $\SL(2,\mathbb{H})$ contains reversible elements that are not strongly reversible; for example,
	$
	A=\begin{psmallmatrix}
		\mathbf{i} & 0\\
		0 & 1
	\end{psmallmatrix}\in \SL(2,\mathbb{H}).
	$
	This shows that classifying strongly reversible elements in $\SL(2,\mathbb{H})$ is a problem of independent interest. 
	We note that reversibility in $\SL(n,\mathbb{H})$ for arbitrary $n$ has been classified only recently in~\cite{GLM} using a more general approach involving special canonical forms. 
	In this paper we use explicit algebraic computations to classify reversible and strongly reversible elements in $\SL(2,\mathbb{H})$; see Lemma~\ref{lem-rev-SL(2,H)} and Lemma~\ref{lem-classification-str-rev-SL}. 
	These classifications are key ingredients in the proof of Theorem~\ref{thm-rev-alg-char}.
	
	Another outcome of our method is an explicit description of reversing symmetries. 
	Recall that a \emph{reversing symmetry} (also called a \emph{time-reversal symmetry} or a \emph{reverser}) of a reversible element $g$ in a group $G$ is an element of $G$ that conjugates $g$ to $g^{-1}$; see \cite{LR,BR}. 
	A complete description of reversing symmetries for reversible quaternionic M\"obius transformations does not appear to be available in the existing literature. 
	Using our approach, such descriptions follow from Lemma~\ref{lem-reverser} and Lemma~\ref{lem-reverser-PSL}.

	%\medskip
	\noindent\textbf{Structure of the paper.}
	In Section~\ref{sec-prel}, we recall preliminary results on conjugacy invariants, conjugacy classes, and centralizers in $\SL(2,\mathbb{H})$. 
	Section~\ref{sec-rev} classifies reversible elements in $\SL(2,\mathbb{H})$ and describes their reversing symmetries, and then uses these results to study strongly reversible elements in $\SL(2,\mathbb{H})$. 
	This section also discusses decompositions of reversible elements in $\SL(2,\mathbb{H})$ as products of two skew-involutions. 
	In Section~\ref{sec-rev-projective-group}, we investigate elements $A\in \SL(2,\mathbb{H})$ that are conjugate to $-A^{-1}$ and complete the proof of Theorem~\ref{thm-rev-alg-char}. 
	In Section~\ref{sec-inv-skew}, we discuss reversing symmetries and involutions in $\PSL(2,\mathbb{H})$.

	\section{Preliminaries}\label{sec-prel}
	
	Let $\H:= \R + \R \ib + \R \jb + \R \kb$ be the division algebra of Hamilton's quaternions, where
	$\ib^2=\jb^2=\kb^2=\ib\jb\kb=-1$.
	For $a= a_0 + a_1 \ib + a_2 \jb + a_3 \kb \in \H$, the conjugate, real part, and imaginary part of $a$
	are defined by
	\[
	\bar{a}:= a_0 - a_1 \ib - a_2 \jb - a_3 \kb,\qquad 
	\mathfrak{R}(a):= a_0,\qquad 
	\mathfrak{I}(a):= a_1 \ib + a_2 \jb + a_3 \kb,
	\]
	respectively, where $a_m \in \R$ for all $0 \leq m \leq 3$.
	We identify the real subspace $\R + \R \ib$ with the complex plane $\C$, and hence we may write
	$\H=\C+\C\jb$.
	
	Consider the algebra $\mathrm{M}(2,\H)$ of $2\times 2$ matrices over $\H$.
	A nonzero vector $v\in \H^2$ is called a (right) eigenvector of $A\in \mathrm{M}(2,\H)$
	corresponding to a (right) eigenvalue $\lambda\in \H$ if $Av=v\lambda$.
	Eigenvalues of quaternionic matrices occur in similarity classes, and each such class contains a unique
	complex number with nonnegative imaginary part. 
	Unless otherwise stated, we regard these unique complex representatives as the eigenvalues of $A$;
	see \cite{rodman} for a detailed discussion of quaternionic linear algebra.
	
	Let $\Phi$ be the embedding $\mathrm{M}(2,\H)\to \mathrm{M}(4,\C)$ defined by
	\begin{equation}\label{eq-def-embedd-phi}
		\Phi(A):=
		\begin{pmatrix}
			A_1 & A_2\\
			-\overline{A_2} & \overline{A_1}
		\end{pmatrix},
	\end{equation}
	where $A_1,A_2\in \mathrm{M}(2,\C)$ are such that $A=A_1+A_2\jb$, and $\overline{A_i}$ denotes the
	complex conjugate of $A_i$.
	The quaternionic determinant ${\det}_{\H}(A)$ and the characteristic polynomial $\chi_{\H}(A)$ of
	$A$ are defined by
	\begin{equation}\label{eq-def-det-char}
		{\det}_{\H}(A):=\det(\Phi(A))
		\qquad\text{and}\qquad
		\chi_{\H}(A):=\chi_{\Phi(A)},
	\end{equation}
	respectively; see \cite[p.~113]{rodman}.
	By the Skolem--Noether theorem, these definitions are independent of the choice of the embedding $\Phi$.
	We define
	\begin{equation}\label{eq-def-GL-SL}
		\GL(2,\H):=\{A\in \mathrm{M}(2,\H)\mid {\det}_{\H}(A)\neq 0\},
		\qquad
		\SL(2,\H):=\{A\in \GL(2,\H)\mid {\det}_{\H}(A)=1\}.
	\end{equation}
	
	\subsection{Conjugacy classes in $\SL(2,\H)$}\label{subsec-conju}
	
	In the following lemma, we recall the well-known conjugacy classification in $\SL(2,\H)$, which follows
	from the Jordan decomposition of quaternionic matrices; see \cite[Theorem~5.5.3]{rodman}.
	
	\begin{lemma}\label{lem-conj}
		Every element of $\SL(2,\H)$ is conjugate to one of the following matrices:
		\begin{enumerate}[label={\normalfont(\roman*)}]
			\item 
			$\begin{pmatrix}
				r e^{\ib \theta} & 0\\
				0 & r^{-1} e^{\ib \phi}
			\end{pmatrix}$,
			where $r\in \R$, $r>0$, and $\theta,\phi\in [0,\pi]$.
			
			\item 
			$\begin{pmatrix}
				e^{\ib \theta} & 1\\
				0 & e^{\ib \theta}
			\end{pmatrix}$,
			where $\theta\in [0,\pi]$.
		\end{enumerate}
	\end{lemma}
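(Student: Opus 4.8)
The plan is to deduce the classification from the quaternionic Jordan canonical form (Rodman, Theorem 5.5.3) together with the determinant-one normalization, the only genuine work being the upgrade from conjugacy in ${\rm GL}(2,\H)$ to conjugacy in ${\rm SL}(2,\H)$. First I would recall that every $A \in \mathrm{M}(2,\H)$ is conjugate, by some $P \in {\rm GL}(2,\H)$, to its quaternionic Jordan form, whose blocks carry complex eigenvalues with non-negative imaginary part (the normalization already fixed in the Preliminaries). Since a $2 \times 2$ matrix admits only the block partitions $1+1$ and $2$, the Jordan form is either a diagonal matrix $\begin{pmatrix} \lambda_1 & 0 \\ 0 & \lambda_2\end{pmatrix}$ or a single Jordan block $\begin{pmatrix} \lambda & 1 \\ 0 & \lambda\end{pmatrix}$, where each eigenvalue is written as $re^{\ib\theta}$ with $r>0$ (nonzero since $A$ is invertible) and $\theta \in [0,\pi]$.

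Next I would impose $\det_{\H}(A)=1$. Using that $\det_{\H}=\det\circ\,\Phi$ and that $\Phi$ of a complex diagonal or upper-triangular matrix $A_1$ is the block-diagonal matrix $\begin{pmatrix} A_1 & 0 \\ 0 & \overline{A_1}\end{pmatrix}$, a short computation gives $\det_{\H}\begin{pmatrix} \lambda_1 & 0 \\ 0 & \lambda_2\end{pmatrix} = |\lambda_1|^2|\lambda_2|^2 = r_1^2 r_2^2$ in the diagonal case and $\det_{\H}\begin{pmatrix} \lambda & 1 \\ 0 & \lambda\end{pmatrix} = |\lambda|^4 = r^4$ in the Jordan case. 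Setting these equal to $1$ forces $r_2 = r_1^{-1}$ in the first case, which is exactly form (1) with $r=r_1$ and $\theta,\phi$ the two arguments, and forces $r=1$, i.e. $\lambda = e^{\ib\theta}$, in the second case, which is exactly form (2). Thus, after the determinant normalization, the two Jordan shapes reproduce precisely the two families in the statement.

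The step requiring care is that the Jordan reduction is a priori only a conjugacy in ${\rm GL}(2,\H)$, whereas the lemma asserts conjugacy inside ${\rm SL}(2,\H)$. Here I would use two properties of $\det_{\H}$: it is multiplicative, being $\det\circ\,\Phi$ with $\Phi$ a ring homomorphism, and for invertible matrices it is a strictly positive real number, because the complex eigenvalues of $\Phi(P)$ occur in conjugate pairs, so $\det\Phi(P)=\prod_i|\mu_i|^2>0$. Multiplicativity shows the Jordan form $J=P^{-1}AP$ already satisfies $\det_{\H}(J)=\det_{\H}(A)=1$, so $J \in {\rm SL}(2,\H)$. To bring the conjugator into ${\rm SL}(2,\H)$ as well, I would set $s:=\det_{\H}(P)^{-1/4}\in\R^{+}$, which is well defined by positivity, and replace $P$ by $sP$: since $s$ is a central real scalar it commutes with $A$, so $(sP)^{-1}A(sP)=P^{-1}AP=J$, while $\det_{\H}(sP)=s^4\det_{\H}(P)=1$, whence $sP \in {\rm SL}(2,\H)$. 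This realizes $A$ as conjugate to $J$ within ${\rm SL}(2,\H)$ and finishes the proof. I expect this positivity-and-multiplicativity of $\det_{\H}$, which legitimizes the real rescaling, to be the only delicate point; the eigenvalue normalization and the two determinant computations are routine.
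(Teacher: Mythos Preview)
Your proof is correct and follows the same route the paper indicates: the paper does not actually write out a proof of this lemma but merely cites Rodman's Jordan decomposition (Theorem 5.5.3), and your argument is precisely the unpacking of that citation together with the determinant normalization. The extra care you take in upgrading the conjugacy from ${\rm GL}(2,\H)$ to ${\rm SL}(2,\H)$ via the positive real rescaling $s=\det_{\H}(P)^{-1/4}$ is a detail the paper leaves implicit, and your treatment of it is sound.
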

	
	In view of Lemma~\ref{lem-conj}, we obtain the following dynamical classification of elements in
	$\PSL(2,\H)$ into three mutually exclusive types.
	
	\begin{lemma}\label{lem-class-name}
		Let $[A]\in \PSL(2,\H)$, where $A$ is a lift of $[A]$ in $\SL(2,\H)$. Then:
		\begin{enumerate}[label={\normalfont(\roman*)}]
			\item $[A]$ is \emph{hyperbolic} if $A$ has at least one eigenvalue of non-unit modulus.
			\item $[A]$ is \emph{elliptic} if $A$ is diagonalizable and every eigenvalue of $A$ has unit modulus.
			\item $[A]$ is \emph{parabolic} if $A$ is nondiagonalizable and every eigenvalue of $A$ has unit modulus.
		\end{enumerate}
	\end{lemma}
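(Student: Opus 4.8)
The plan is to read off the three-fold classification directly from the conjugacy normal forms of \lemref{lem-conj}, after first checking that the three defining properties descend to $\mathrm{PSL}(2,\H)$. The properties in question are: possessing an eigenvalue of non-unit modulus; being diagonalizable (semisimple); and having every eigenvalue of unit modulus. First I would verify that each of these is invariant under conjugation in $\mathrm{SL}(2,\H)$ and, crucially, under replacing a lift $A$ by its other lift $-A$. Conjugation invariance is immediate, since eigenvalues (taken as similarity-class representatives) and diagonalizability are conjugacy invariants. For lift invariance, the eigenvalues of $-A$ are the negatives of those of $A$, so their moduli are unchanged, and $A=PDP^{-1}$ precisely when $-A=P(-D)P^{-1}$, so diagonalizability is preserved as well. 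This shows the classification is well defined on $\mathrm{PSL}(2,\H)$ and independent of the chosen lift and conjugacy representative.

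Next I would establish exhaustiveness and determine the class of each normal form. By \lemref{lem-conj}, every element is conjugate either to the diagonal form with parameter $r$ and angles $\theta,\phi$, or to the Jordan form with angle $\theta$. In the diagonal case the eigenvalues are $re^{\ib\theta}$ and $r^{-1}e^{\ib\phi}$, with reciprocal moduli $r$ and $r^{-1}$ (consistent with $\det_{\H}=1$); hence $r\neq 1$ if and only if both eigenvalues have non-unit modulus, placing the element in the hyperbolic class, while $r=1$ gives a diagonal matrix all of whose eigenvalues have unit modulus, placing it in the elliptic class. In the Jordan case the single eigenvalue $e^{\ib\theta}$ has unit modulus and the matrix is a nontrivial Jordan block, hence non-diagonalizable, placing it in the parabolic class. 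Since every element is conjugate to exactly one of these forms, each element lands in at least one of the three classes.

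Finally I would verify mutual exclusivity. The hyperbolic class requires an eigenvalue of non-unit modulus and is therefore disjoint from both the elliptic and the parabolic class, each of which requires every eigenvalue to have unit modulus; and the elliptic and parabolic classes are separated by diagonalizability versus non-diagonalizability. Combined with exhaustiveness, this gives exactly one class per element, which is the asserted well-defined partition of $\mathrm{PSL}(2,\H)$.

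The main obstacle I anticipate is the well-definedness step rather than the casework. One must be careful that the normalization to complex representatives with non-negative imaginary part adopted in \secref{sec-prel} (following \defref{def-embedd-phi}), together with the passage $A\mapsto -A$, does not silently alter the class. Concretely, $-e^{\ib\theta}$ is similar to $e^{\ib(\pi-\theta)}$, so this bookkeeping shifts the angles but leaves both the modulus and the Jordan structure intact; confirming that these are the only data entering the three defining conditions is the crux. If one additionally wishes to reconcile these algebraic labels with the dynamical definitions of \secref{sec-intro}, a short supplementary computation of the fixed points of each normal form on $\widehat{\H}$ and $\rh^5$ suffices, but this is routine once the normal forms are available.
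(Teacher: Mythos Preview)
Your proposal is correct and follows exactly the approach the paper indicates: the paper does not give a separate proof of \lemref{lem-class-name} but simply states that it follows ``in view of the above conjugacy classification'' (\lemref{lem-conj}), and you have spelled out precisely this derivation, including the well-definedness under the lift change $A\mapsto -A$ and the exhaustive/exclusive casework on the normal forms.
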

	
	\subsection{Conjugacy invariants of a quaternionic matrix}\label{sec-conj-invar}
	
	The following result is a consequence of \cite{Fo,KL} and provides conjugacy invariants for quaternionic
	matrices in $\mathrm{M}(2,\H)$.
	
	\begin{lemma}[cf.~{\cite[Section 4]{KL}}] \label{lem-conjugacy-invariant}
		Let $A=\begin{pmatrix} a & b\\ c & d \end{pmatrix}\in \mathrm{M}(2,\H)$. 
		Then the following functions on $\mathrm{M}(2,\H)$ are conjugation invariants:
		\begin{enumerate}[label={\normalfont(\roman*)}]
			\item $\alpha_A := |a|^2|d|^2 + |b|^2|c|^2 - 2\,\mathfrak{R}(a\bar{c}d\bar{b})$.
			\item $\beta_A := \mathfrak{R}\big((ad-bc)\bar{a}+(da-cb)\bar{d}\big)$.
			\item $\gamma_A := |a+d|^{2} + 2\,\mathfrak{R}(ad-bc)$.
			\item $\delta_A := \mathfrak{R}(a+d)$.
		\end{enumerate}
	\end{lemma}
	
	Note that $\beta_A=-\beta_{-A}$ and $\delta_A=-\delta_{-A}$ for all $A\in \mathrm{M}(2,\H)$.
	Since $A$ and $-A$ are the two possible lifts of an element $[A]\in \PSL(2,\H)$, the quantities
	$\beta_A$ and $\delta_A$ depend on the choice of lift.
	However, the invariants $\alpha_A$, $\beta_A^2$, $\gamma_A$, and $\delta_A^2$ are well-defined for
	$[A]\in \PSL(2,\H)$.
	In particular, the statement of Theorem~\ref{thm-rev-alg-char} is independent of the chosen lift.
	
	\subsection{Characteristic polynomial of a quaternionic matrix}\label{sec-char-poly}
	
	Suppose that the characteristic polynomial of
	$A=\begin{pmatrix} a & b\\ c & d \end{pmatrix}\in \mathrm{M}(2,\H)$ is
	\begin{equation}\label{eq-char-GL}
		\chi_{\H}(A)=x^4-c_3x^3+c_2x^2-c_1x+c_0,
	\end{equation}
	where $c_0={\det}_{\H}(A)$ is the quaternionic determinant of $A$.
	In view of \cite[Section~4]{KL}, we have
	\begin{equation}\label{eq-char-GL-relation}
		c_3=2\delta_A,\qquad
		c_2=\gamma_A,\qquad
		c_1=2\beta_A,\qquad
		c_0=\alpha_A.
	\end{equation}
	If $A\in \SL(2,\H)$, then $c_0=\alpha_A=1$. 
	Moreover, for $A\in \SL(2,\H)$, we have
	\begin{equation}\label{eq-char-inverse}
		\chi_{\H}(-A)=x^4+c_3x^3+c_2x^2+c_1x+1
		\quad\text{and}\quad
		\chi_{\H}(A^{-1})=x^4-c_1x^3+c_2x^2-c_3x+1.
	\end{equation}
	By Lemma~\ref{lem-conjugacy-invariant} and~\eqref{eq-char-GL-relation}, each coefficient $c_i\in \R$ is a
	conjugacy invariant of $A$, and can be expressed explicitly in terms of the entries $a,b,c,d$ of $A$.
	
	\subsection{Centralizers in $\SL(2,\H)$}
	
	For $a\in \H$, define the \emph{centralizer} of $a$ in $\H$ by
	\begin{equation}\label{eq-def-center-quat}
		\mathcal{Z}(a):=\{x\in \H\mid xa=ax\}.
	\end{equation}
	Using \cite[Lemma~2.6(2)]{dgl2}, for $\theta\in [0,\pi]$ we have
	\begin{equation}\label{eq-center-quat}
		\mathcal{Z}(e^{\ib \theta})=
		\begin{cases}
			\C, & \text{if } \theta\in (0,\pi),\\
			\H, & \text{if } \theta\in \{0,\pi\}.
		\end{cases}
	\end{equation}
	
	The \emph{centralizer} of an element $A\in \SL(2,\H)$ is
	\begin{equation}\label{eq-def-cent}
		\mathcal{Z}_{\SL(2, \, \H)}(A):=\{g\in \SL(2,\H)\mid gAg^{-1}=A\}.
	\end{equation}
	Centralizers in $\GL(2,\H)$ are well-studied; see \cite[Section~5]{KG}.
	The following lemma describes the centralizer of each conjugacy class representative in $\SL(2,\H)$.
	
	\begin{lemma}[\cite{KG}]\label{lem-centralizer}
		Let $\theta,\phi\in [0,\pi]$ and $r\in \R$, $r>0$. Then:
		\begin{enumerate}[label={\normalfont(\roman*)}]
			\item If $A=\begin{pmatrix} e^{\ib \theta} & 0\\ 0 & e^{\ib \theta} \end{pmatrix}$, then
			$
			\mathcal{Z}_{\SL(2, \,\H)}(A)=
			\left\{
			\begin{pmatrix} a & b\\ c & d \end{pmatrix}
			\ \middle|\ 
			a,b,c,d\in \mathcal{Z}(e^{\ib \theta})
			\right\}.
			$
			
			\item If $A=\begin{pmatrix} r e^{\ib \theta} & 0\\ 0 & r^{-1} e^{\ib \phi} \end{pmatrix}$ with
			$\theta\neq \phi$, then
			$
			\mathcal{Z}_{\SL(2, \,\H)}(A)=
			\left\{
			\begin{pmatrix} a & 0\\ 0 & b \end{pmatrix}
			\ \middle|\ 
			a\in \mathcal{Z}(e^{\ib \theta}),\ b\in \mathcal{Z}(e^{\ib \phi})
			\right\}.
			$
			
			\item If $A=\begin{pmatrix} r e^{\ib \theta} & 0\\ 0 & r^{-1} e^{\ib \theta} \end{pmatrix}$ with
			$r\neq 1$, then
			$
			\mathcal{Z}_{\SL(2, \,\H)}(A)=
			\left\{
			\begin{pmatrix} a & 0\\ 0 & b \end{pmatrix}
			\ \middle|\ 
			a,b\in \mathcal{Z}(e^{\ib \theta})
			\right\}.
			$
			
			\item If $A=\begin{pmatrix} e^{\ib \theta} & 1\\ 0 & e^{\ib \theta} \end{pmatrix}$, then
			$
			\mathcal{Z}_{\SL(2, \,\H)}(A)=
			\left\{
			\begin{pmatrix} a & b\\ 0 & a \end{pmatrix}
			\ \middle|\ 
			a,b\in \mathcal{Z}(e^{\ib \theta})
			\right\}.
			$
		\end{enumerate}
	\end{lemma}
	
	\subsection{Extended centralizers in $\SL(2,\H)$}\label{sec-rev-symm}
	
	A \emph{reversing symmetry} (also called a \emph{time-reversal symmetry} or a \emph{reverser}) of a
	reversible element $A$ is an element of $\SL(2,\H)$ that conjugates $A$ to $A^{-1}$; see \cite{LR,BR}.
	The set of all reversing symmetries of $A$ is
	\begin{equation}\label{eq-def-rev}
		\mathcal{R}_{\SL(2, \, \H)}(A):=\{g\in \SL(2,\H)\mid gAg^{-1}=A^{-1}\}.
	\end{equation}
	Note that $\mathcal{R}_{\SL(2, \, \H)}(A)$ is a right coset of the centralizer $\mathcal{Z}_{\SL(2, \, \H)}(A)$.
	Moreover, the \emph{extended centralizer} of $A$ is defined by
	\[
	\mathcal{E}_{\SL(2, \,\H)}(A):=\mathcal{Z}_{\SL(2, \, \H)}(A)\cup \mathcal{R}_{\SL(2, \, \H)}(A),
	\]
	and forms a subgroup of $\SL(2,\H)$ in which $\mathcal{Z}_{\SL(2, \, \H)}(A)$ has index at most $2$; see
	\cite{BR} and \cite[Section~2.1.4]{OS}.
	
	We provide a complete description of reversing symmetries of reversible elements in $\SL(2,\H)$ in
	Lemma~\ref{lem-reverser}; see Section~\ref{sec-inv-skew} for the corresponding result in $\PSL(2,\H)$.
	
	\section{Reversibility in $\mathrm{SL}(2,\mathbb{H})$}\label{sec-rev}
	
	In this section, we investigate the reversibility problem in the group $\mathrm{SL}(2,\mathbb{H})$.
	The following lemma classifies reversible elements in $\mathrm{SL}(2,\mathbb{H})$.
	
	\begin{lemma}\label{lem-rev-SL(2,H)}
		An element $A\in \mathrm{SL}(2,\mathbb{H})$ is reversible in $\mathrm{SL}(2,\mathbb{H})$ if and only if it is conjugate in $\mathrm{SL}(2,\mathbb{H})$ to one of the following matrices:
		\begin{enumerate}[label={\normalfont(\arabic*)}]
			\item\label{rev-type-1}
			$\begin{pmatrix}
				e^{\ib \theta} & 0\\
				0 & e^{\ib \theta}
			\end{pmatrix}$, where $\theta\in [0,\pi]$.
			
			\item\label{rev-type-2}
			$\begin{pmatrix}
				e^{\ib \theta} & 0\\
				0 & e^{\ib \phi}
			\end{pmatrix}$, where $\theta,\phi\in [0,\pi]$ and $\theta\neq \phi$.
			
			\item\label{rev-type-3}
			$\begin{pmatrix}
				r e^{\ib \theta} & 0\\
				0 & r^{-1} e^{\ib \theta}
			\end{pmatrix}$, where $r\in \R$, $r>0$, $r\neq 1$, and $\theta\in [0,\pi]$.
			
			\item\label{rev-type-4}
			$\begin{pmatrix}
				e^{\ib \theta} & 1\\
				0 & e^{\ib \theta}
			\end{pmatrix}$, where $\theta\in [0,\pi]$.
		\end{enumerate}
	\end{lemma}
	
	\begin{proof}
		For $A\in \mathrm{SL}(2,\mathbb{H})$ of one of the types~\ref{rev-type-1}--\ref{rev-type-4}, consider the following matrices $g\in \mathrm{SL}(2,\mathbb{H})$, respectively:
		\[
		\begin{array}{llll}
			\text{(i)}~
			g := \begin{pmatrix} \jb & 0 \\ 0 & \jb \end{pmatrix}, & \quad
			\text{(ii)}~
			g := \begin{pmatrix} \jb & 0 \\ 0 & \jb \end{pmatrix}, & \quad
			\text{(iii)}~
			g := \begin{pmatrix} 0 & \jb \\ \jb & 0 \end{pmatrix}, & \quad
			\text{(iv)}~
			g := \begin{pmatrix} -e^{-2\ib \theta}\,\jb & 0 \\ 0 & \jb \end{pmatrix}.
		\end{array}
		\]
		In each case, a straightforward calculation gives $gAg^{-1}=A^{-1}$.
		Hence $A$ is reversible in $\mathrm{SL}(2,\mathbb{H})$.
		
		Conversely, recall that $\jb z=\bar z\,\jb$ for all $z\in \C$.
		For a unique complex representative $\lambda\in \C$ of an eigenvalue class of $A$, the eigenvalues
		$\lambda$ and $\lambda^{-1}$ are conjugate if and only if $\lambda^{-1}=\overline{\lambda}$.
		Moreover, $A$ is reversible in $\mathrm{SL}(2,\mathbb{H})$ if and only if $A$ and $A^{-1}$ have the same Jordan form.
		Therefore, if $A$ is reversible, then by Lemma~\ref{lem-conj} it must be conjugate to one of the matrices listed in~\ref{rev-type-1}--\ref{rev-type-4}.
	\end{proof}
	
	\begin{remark}\label{rem-non-rev}
		In view of Lemmas~\ref{lem-conj} and~\ref{lem-rev-SL(2,H)}, an element $A\in \mathrm{SL}(2,\mathbb{H})$ is not reversible if and only if it is conjugate to
		\[
		\begin{pmatrix}
			r e^{\ib \theta} & 0\\
			0 & r^{-1} e^{\ib \phi}
		\end{pmatrix},
		\]
		where $r\in \R$, $r>0$, $\theta,\phi\in [0,\pi]$, and $r\neq 1$, $\theta\neq \phi$.
		In particular, $A\in \mathrm{SL}(2,\mathbb{H})$ is not reversible if and only if $\beta_A\neq \delta_A$.
		\qed
	\end{remark}
	
	\subsection{Strongly reversible elements in $\mathrm{SL}(2,\mathbb{H})$}\label{sec-str-rev}
	
	In this subsection, we classify strongly reversible elements in $\mathrm{SL}(2,\mathbb{H})$.
	We begin with the following lemma, which describes the set of reversing symmetries of a reversible element in $\mathrm{SL}(2,\mathbb{H})$.
	The notation $\mathcal{Z}(a)$ for $a\in \mathbb{H}$ is defined in~\eqref{eq-def-center-quat}.
	
	\begin{lemma}\label{lem-reverser}
		Let $A\in \mathrm{SL}(2,\mathbb{H})$ be a reversible element of one of the types~\ref{rev-type-1}--\ref{rev-type-4} in Lemma~\ref{lem-rev-SL(2,H)}.
		Then the set $\mathcal{R}_{\mathrm{SL}(2, \, \mathbb{H})}(A)$ of reversing symmetries of $A$ is given as follows, respectively:
		\begin{enumerate}[label={\normalfont(\roman*)}]
			\item
			$
			\mathcal{R}_{\mathrm{SL}(2, \, \mathbb{H})}(A)
			=
			\left\{
			\begin{pmatrix}
				a\,\jb & b\,\jb\\
				c\,\jb & d\,\jb
			\end{pmatrix}
			\ \middle|\
			a,b,c,d\in \mathcal{Z}(e^{\ib \theta})
			\right\}.
			$
			
			\item
			$
			\mathcal{R}_{\mathrm{SL}(2, \, \mathbb{H})}(A)
			=
			\left\{
			\begin{pmatrix}
				a\,\jb & 0\\
				0 & b\,\jb
			\end{pmatrix}
			\ \middle|\
			a\in \mathcal{Z}(e^{\ib \theta}),\ b\in \mathcal{Z}(e^{\ib \phi})
			\right\}.
			$
			
			\item
			$
			\mathcal{R}_{\mathrm{SL}(2, \, \mathbb{H})}(A)
			=
			\left\{
			\begin{pmatrix}
				0 & a\,\jb\\
				b\,\jb & 0
			\end{pmatrix}
			\ \middle|\
			a,b\in \mathcal{Z}(e^{\ib \theta})
			\right\}.
			$
			
			\item
			$
			\mathcal{R}_{\mathrm{SL}(2, \, \mathbb{H})}(A)
			=
			\left\{
			\begin{pmatrix}
				-(e^{-2\ib \theta}a)\,\jb & b\,\jb\\
				0 & a\,\jb
			\end{pmatrix}
			\ \middle|\
			a,b\in \mathcal{Z}(e^{\ib \theta})
			\right\}.
			$
		\end{enumerate}
	\end{lemma}
	
	\begin{proof}
		Recall that $\mathcal{R}_{\mathrm{SL}(2, \, \mathbb{H})}(A)$ is a right coset of the centralizer $\mathcal{Z}_{\mathrm{SL}(2, \, \mathbb{H})}(A)$; see Section~\ref{sec-rev-symm}.
		Thus, if $g\in \mathcal{R}_{\mathrm{SL}(2, \, \mathbb{H})}(A)$, then
		\[
		\mathcal{R}_{\mathrm{SL}(2, \, \mathbb{H})}(A)=\mathcal{Z}_{\mathrm{SL}(2, \, \mathbb{H})}(A)\,g.
		\]
		For each reversible element $A$ of types~\ref{rev-type-1}--\ref{rev-type-4}, choose $g\in \mathcal{R}_{\mathrm{SL}(2, \, \mathbb{H})}(A)$ as in the proof of Lemma~\ref{lem-rev-SL(2,H)}.
		The result now follows from Lemma~\ref{lem-centralizer}.
	\end{proof}
	
	We now use Lemma~\ref{lem-reverser} to study strongly reversible elements in $\mathrm{SL}(2,\mathbb{H})$.
	The next lemma identifies reversible elements in $\mathrm{SL}(2,\mathbb{H})$ that are not strongly reversible.
	
	\begin{lemma}\label{lem-non-strong-rev-SL(2,H)}
		Suppose that $A\in \mathrm{SL}(2,\mathbb{H})$ is one of the following matrices:
		\begin{enumerate}[label={\normalfont(\arabic*)}]
			\item\label{non-strong-rev-type-1}
			$\begin{pmatrix}
				e^{\ib \theta} & 0\\
				0 & e^{\ib \phi}
			\end{pmatrix}$, where $\theta\in (0,\pi)$ and $\phi\in [0,\pi]$ with $\theta\neq \phi$;
			
			\item\label{non-strong-rev-type-2}
			$\begin{pmatrix}
				e^{\ib \theta} & 1\\
				0 & e^{\ib \theta}
			\end{pmatrix}$, where $\theta\in (0,\pi)$.
		\end{enumerate}
		Then $A$ is not strongly reversible in $\mathrm{SL}(2,\mathbb{H})$.
	\end{lemma}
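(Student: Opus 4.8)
The plan is to exploit the explicit description of the reverser set $\mathcal{R}_{\mathrm{SL(2,\H)}}(A)$ supplied by \lemref{lem-reverser} and to check, in each of the two cases, that this set contains no involution. Both matrices $A$ in the statement are reversible in $\mathrm{SL}(2,\H)$ — they are the conjugacy representatives of types $(\ref{rev-type-2})$ and $(\ref{rev-type-4})$ of \thmref{thm-rev-SL(2,H)}, respectively — so \lemref{lem-reverser} applies. Since $A$ is strongly reversible in $\mathrm{SL}(2,\H)$ precisely when some involution lies in $\mathcal{R}_{\mathrm{SL(2,\H)}}(A)$, it suffices to establish the stronger fact that no $g \in \mathcal{R}_{\mathrm{SL(2,\H)}}(A)$ satisfies $g^2 = \mathrm{I}_2$.

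The computation I would isolate at the outset is the identity $\jb a = \bar a\,\jb$ for $a \in \C$, which yields
$$(a\jb)^2 \;=\; a(\jb a)\jb \;=\; a\bar a\,\jb^2 \;=\; -|a|^2 \;\le\; 0,$$
so $(a\jb)^2$ is never equal to $1$. Now, in case $(\ref{non-strong-rev-type-1})$ the hypothesis $\theta \in (0,\pi)$ forces — via part~(2) of \lemref{lem-reverser} — every $g \in \mathcal{R}_{\mathrm{SL(2,\H)}}(A)$ to have the form $g = \begin{psmallmatrix} a\jb & 0 \\ 0 & b\jb \end{psmallmatrix}$ with $a \in \C$; then the $(1,1)$-entry of $g^2$ equals $(a\jb)^2 = -|a|^2 \neq 1$, whence $g^2 \neq \mathrm{I}_2$. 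In case $(\ref{non-strong-rev-type-2})$ the same hypothesis $\theta \in (0,\pi)$ forces — via part~(4) of \lemref{lem-reverser} — every $g \in \mathcal{R}_{\mathrm{SL(2,\H)}}(A)$ to be upper triangular with $(2,2)$-entry $a\jb$ for some $a \in \C$; then the $(2,2)$-entry of $g^2$ equals $(a\jb)^2 = -|a|^2 \neq 1$, whence $g^2 \neq \mathrm{I}_2$. In both cases $\mathcal{R}_{\mathrm{SL(2,\H)}}(A)$ contains no involution, so $A$ is not strongly reversible in $\mathrm{SL}(2,\H)$.

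I do not anticipate a genuine obstacle, as the substance is already packaged in \lemref{lem-reverser}. The single point worth emphasizing is that the hypothesis $\theta \in (0,\pi)$ is exactly what confines the free parameter $a$ of a reverser to the complex subfield $\C \subset \H$, and it is this confinement alone that obstructs strong reversibility, because $(a\jb)^2$ is then forced to be a non-positive real. A minor convenience is that it is enough to inspect a single diagonal entry of $g^2$, so no full $2 \times 2$ quaternionic matrix product has to be carried out.
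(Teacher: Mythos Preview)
Your proposal is correct and follows essentially the same route as the paper: both arguments invoke \lemref{lem-reverser} to pin down the form of every reverser, then use the identity $(a\jb)^2=-|a|^2$ for $a\in\C$ to show that a suitable diagonal entry of $g^2$ can never equal $1$. The only cosmetic difference is that the paper frames this as a proof by contradiction and inspects the $(1,1)$-entry in both cases, whereas you argue directly and look at the $(2,2)$-entry in case~\eqref{non-strong-rev-type-2}; the substance is identical.
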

	
	\begin{proof}
		By Lemma~\ref{lem-rev-SL(2,H)}, the matrix $A$ is reversible in $\mathrm{SL}(2,\mathbb{H})$.
		Let $g\in \mathcal{R}_{\mathrm{SL}(2, \, \mathbb{H})}(A)$.
		Depending on whether $A$ is of type~\ref{non-strong-rev-type-1} or~\ref{non-strong-rev-type-2}, Lemma~\ref{lem-reverser} implies that $g$ must have one of the following forms, respectively:
		\begin{enumerate}[label={\normalfont(\roman*)}]
			\item
			$
			g=
			\begin{pmatrix}
				z\,\jb & 0\\
				0 & b
			\end{pmatrix},
			\quad
			\text{where } z\in \C \text{ and } b\in \mathcal{Z}(e^{\ib \phi});
			$
			
			\item
			$
			g=
			\begin{pmatrix}
				-(e^{-2\ib \theta}z)\,\jb & w\,\jb\\
				0 & z\,\jb
			\end{pmatrix},
			\quad
			\text{where } z,w\in \C.
			$
		\end{enumerate}
		For $z\in \C$, we have
		\begin{equation}\label{eq-1-non-strong-rev-SL(2,H)}
			(z\jb)^2=-|z|^2\neq 1.
		\end{equation}
		Hence $g^2\neq \mathrm{I}_2$, so there is no involution in $\mathcal{R}_{\mathrm{SL}(2, \, \mathbb{H})}(A)$.
		Therefore, $A$ is not strongly reversible in $\mathrm{SL}(2,\mathbb{H})$.
	\end{proof}
	
	The following lemma classifies strongly reversible elements in $\mathrm{SL}(2,\mathbb{H})$.
	
	\begin{lemma}\label{lem-classification-str-rev-SL}
		Let $A$ be a reversible element of $\mathrm{SL}(2,\mathbb{H})$.
		Then $A$ is strongly reversible in $\mathrm{SL}(2,\mathbb{H})$ if and only if it is conjugate to one of the following matrices:
		\begin{enumerate}[label={\normalfont(\arabic*)}]
			\item\label{strong-rev-type-1}
			$\begin{pmatrix}
				e^{\ib \theta} & 0\\
				0 & e^{\ib \theta}
			\end{pmatrix}$, where $\theta\in (0,\pi)$;
			
			\item\label{strong-rev-type-2}
			$\begin{pmatrix}
				e^{\ib \theta} & 0\\
				0 & e^{\ib \phi}
			\end{pmatrix}$, where $\theta,\phi\in \{0,\pi\}$;
			
			\item\label{strong-rev-type-3}
			$\begin{pmatrix}
				r e^{\ib \theta} & 0\\
				0 & r^{-1} e^{\ib \theta}
			\end{pmatrix}$, where $r\in \R$, $r>0$, $r\neq 1$, and $\theta\in [0,\pi]$;
			
			\item\label{strong-rev-type-4}
			$\begin{pmatrix}
				e^{\ib \theta} & 1\\
				0 & e^{\ib \theta}
			\end{pmatrix}$, where $\theta\in \{0,\pi\}$.
		\end{enumerate}
	\end{lemma}
	
	\begin{proof}
		By Lemmas~\ref{lem-rev-SL(2,H)} and~\ref{lem-non-strong-rev-SL(2,H)}, every strongly reversible element $A\in \mathrm{SL}(2,\mathbb{H})$ is conjugate to one of the matrices in~\ref{strong-rev-type-1}--\ref{strong-rev-type-4}.
		Conversely, for each matrix in~\ref{strong-rev-type-1}--\ref{strong-rev-type-4}, consider the following element $g\in \mathrm{SL}(2,\mathbb{H})$, respectively:
		\[
		\begin{array}{llll}
			\text{(i)}~ g := \begin{pmatrix} 0 & \jb \\ -\jb & 0 \end{pmatrix}, &
			\quad
			\text{(ii)}~ g := \begin{pmatrix} 1 & 0 \\ 0 & 1 \end{pmatrix}, &
			\quad
			\text{(iii)}~ g := \begin{pmatrix} 0 & \jb \\ -\jb & 0 \end{pmatrix}, &
			\quad
			\text{(iv)}~ g := \begin{pmatrix} 1 & 0 \\ 0 & -1 \end{pmatrix}.
		\end{array}
		\]
		In each case, $g$ is an involution in $\mathrm{SL}(2,\mathbb{H})$ and satisfies $gAg^{-1}=A^{-1}$.
		Hence $A$ is strongly reversible.
	\end{proof}
	
	Note that there exist reversible elements in $\mathrm{SL}(2,\mathbb{H})$ that are not strongly reversible and therefore cannot be written as a product of two involutions; see Lemma~\ref{lem-non-strong-rev-SL(2,H)}.
	However, every reversible element admits a factorization into skew-involutions.
	Recall that $g\in \mathrm{SL}(2,\mathbb{H})$ is called a \emph{skew-involution} if $g^2=-\mathrm{I}_2$.
	
	\begin{proposition}\label{prop-rev-prod-skew-inv}
		Every reversible element of $\mathrm{SL}(2,\mathbb{H})$ can be written as a product of two skew-involutions.
	\end{proposition}
	
	\begin{proof}
		Let $A\in \mathrm{SL}(2,\mathbb{H})$ be reversible.
		By Lemma~\ref{lem-rev-SL(2,H)}, we may assume that $A$ is one of the matrices listed in~\ref{rev-type-1}--\ref{rev-type-4}.
		Let $g$ be as chosen in the proof of Lemma~\ref{lem-rev-SL(2,H)}. Then $g$ is a skew-involution and satisfies $gAg^{-1}=A^{-1}$.
		Finally, note that $A$ can be written as a product of two skew-involutions if and only if there exists $g\in \mathrm{SL}(2,\mathbb{H})$ such that $gAg^{-1}=A^{-1}$ and $g^2=-\mathrm{I}_2$.
		This completes the proof.
	\end{proof}
	
	\section{Reversibility in $\mathrm{PSL}(2,\mathbb{H})$}\label{sec-rev-projective-group}
	
	In this section, we study reversibility in the projective linear group $\mathrm{PSL}(2,\mathbb{H})$. 
	Recall that each element $[A]\in \mathrm{PSL}(2,\mathbb{H})$ has precisely two lifts, $A$ and $-A$, in $\mathrm{SL}(2,\mathbb{H})$. 
	Therefore, $[A]\in \mathrm{PSL}(2,\mathbb{H})$ is reversible if and only if either $gAg^{-1}=A^{-1}$ or $gAg^{-1}=-A^{-1}$ for some $g\in \mathrm{SL}(2,\mathbb{H})$. 
	We classified reversible elements in $\mathrm{SL}(2,\mathbb{H})$ in Section~\ref{sec-rev}. 
	The following lemma classifies all elements $A\in \mathrm{SL}(2,\mathbb{H})$ such that $A$ is conjugate to $-A^{-1}$.
	
	\begin{lemma}\label{lem-rev-PSL-2}
		An element $A\in \mathrm{SL}(2,\mathbb{H})$ satisfies $gAg^{-1}=-A^{-1}$ for some $g\in \mathrm{SL}(2,\mathbb{H})$ if and only if it is conjugate in $\mathrm{SL}(2,\mathbb{H})$ to one of the following matrices:
		\begin{enumerate}[label={\normalfont(\arabic*)}]
			\item\label{rev-type-1-PSL}
			$\begin{pmatrix}
				e^{\ib \theta} & 0\\
				0 & -e^{\ib \theta}
			\end{pmatrix}$, where $\theta\in [0,\pi]$;
			
			\item\label{rev-type-2-PSL}
			$\begin{pmatrix}
				r e^{\ib \theta} & 0\\
				0 & -r^{-1} e^{\ib \theta}
			\end{pmatrix}$, where $r\in \R$, $r>0$, $r\neq 1$, and $\theta\in [0,\pi]$;
			
			\item\label{rev-type-3-PSL}
			$\begin{pmatrix}
				\ib & 1\\
				0 & \ib
			\end{pmatrix}$.
		\end{enumerate}
	\end{lemma}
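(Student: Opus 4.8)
The plan is to mirror the structure of the proof of \thmref{thm-rev-SL(2,H)}: first exhibit explicit conjugators $g$ realizing $gAg^{-1}=-A^{-1}$ for each of the three listed normal forms, then argue that these exhaust all possibilities by reducing to the Jordan form of $A$. For the forward (``if'') direction, note that $-A^{-1}$ has the same meaning as reversibility for $[A]$ up to the sign ambiguity; concretely, for \eqref{rev-type-1-PSL} and \eqref{rev-type-2-PSL} the matrices are diagonal with entries $re^{\ib\theta}, -r^{-1}e^{\ib\theta}$, so $-A^{-1}=\begin{psmallmatrix} -r^{-1}e^{-\ib\theta} & 0 \\ 0 & re^{-\ib\theta}\end{psmallmatrix}$; conjugating by $g=\begin{psmallmatrix} 0 & \jb \\ \jb & 0\end{psmallmatrix}$ swaps the diagonal entries and applies $z\mapsto\bar z$ (using $\jb z=\bar z\jb$), which sends $re^{\ib\theta}\mapsto -r^{-1}e^{-\ib\theta}$ correctly since $\overline{re^{\ib\theta}}=re^{-\ib\theta}$ and the sign is absorbed — here I would double-check whether the needed conjugator is $\begin{psmallmatrix} 0 & \jb \\ \jb & 0\end{psmallmatrix}$ or $\begin{psmallmatrix} 0 & \jb \\ -\jb & 0\end{psmallmatrix}$, and possibly insert a diagonal correction. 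For \eqref{rev-type-3-PSL}, one checks directly that $-A^{-1}=\begin{psmallmatrix} \ib & 1 \\ 0 & \ib\end{psmallmatrix}$ (since $A=\begin{psmallmatrix}\ib & 1\\0&\ib\end{psmallmatrix}$ has $A^{-1}=\begin{psmallmatrix}-\ib & 1\\0&-\ib\end{psmallmatrix}$, wait — recompute: $A^{-1}=\begin{psmallmatrix} \ib^{-1} & -\ib^{-1}\cdot 1\cdot\ib^{-1}\\ 0 & \ib^{-1}\end{psmallmatrix}=\begin{psmallmatrix}-\ib & 1\\0&-\ib\end{psmallmatrix}$, so $-A^{-1}=\begin{psmallmatrix}\ib & -1\\0&\ib\end{psmallmatrix}$), which is conjugate to $A$ by $\mathrm{diag}(1,-1)$ or $\mathrm{diag}(-1,1)$; I would supply the explicit $g\in\mathrm{SL}(2,\H)$ with $gAg^{-1}=-A^{-1}$.

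For the converse, the key observation is the eigenvalue bookkeeping. Writing $A$ in one of the two Jordan forms of \lemref{lem-conj}, the condition $gAg^{-1}=-A^{-1}$ forces $-A^{-1}$ and $A$ to have the same Jordan form, hence the same unordered set of (unique complex representative) eigenvalues. If $\lambda$ is a complex eigenvalue of $A$ with nonnegative imaginary part, then $-\bar\lambda^{-1}$ (after replacing by its similar complex representative with nonnegative imaginary part) must again be an eigenvalue. I would analyze the diagonalizable case: with eigenvalues $\{re^{\ib\theta}, r^{-1}e^{\ib\phi}\}$ (product of moduli $1$), the set must be stable under $\lambda\mapsto -\lambda^{-1}$ up to similarity; since $|-\lambda^{-1}|=|\lambda|^{-1}$, either $r=1$ forcing case \eqref{rev-type-1-PSL}, or $r\neq1$ and $-{(re^{\ib\theta})}^{-1}$ is similar to $r^{-1}e^{\ib\phi}$, which pins down the angle relation giving case \eqref{rev-type-2-PSL}; the subtlety is keeping track of the ``nonnegative imaginary part'' representative when a sign or conjugation is applied, e.g.\ $-e^{\ib\theta}=e^{\ib(\pi+\theta)}$ whose representative is $e^{\ib(\pi-\theta)}$. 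For the non-diagonalizable (parabolic) case, the eigenvalue $e^{\ib\theta}$ must satisfy $e^{\ib\theta}\sim -e^{-\ib\theta}$, i.e.\ $e^{\ib\theta}=-e^{\ib\theta}$ or $e^{\ib\theta}=-e^{-\ib\theta}$ after choosing representatives, forcing $\theta=\pi/2$, which yields case \eqref{rev-type-3-PSL}.

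The main obstacle I anticipate is the careful sign/conjugation bookkeeping in the converse: because we work with the distinguished complex representative of each similarity class (nonnegative imaginary part) rather than the full class, the maps $\lambda\mapsto\bar\lambda$ and $\lambda\mapsto-\lambda$ do not act transparently on representatives, and one must repeatedly translate $-e^{\ib\theta}$, $e^{-\ib\theta}$ etc.\ back into $[0,\pi]$-normalized form before comparing. A clean way to organize this is to pass to $\mathrm{PSL}$-level statements or to argue at the level of $\Phi(A)\in\mathrm{SL}(4,\C)$, where $g A g^{-1}=-A^{-1}$ translates to a genuine $\mathrm{GL}(4,\C)$-conjugacy $\Phi(A)\sim -\Phi(A)^{-1}$, reducing the whole question to: the multiset of four complex eigenvalues of $\Phi(A)$ is invariant under $\lambda\mapsto-\lambda^{-1}$; combined with the constraint that the eigenvalues come in conjugate pairs $\{\mu,\bar\mu,\nu,\bar\nu\}$ with $\mu\bar\mu\nu\bar\nu=1$, a short case analysis recovers exactly the three families. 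I would present the argument this latter way to minimize quaternionic sign-chasing, then read off the normal forms \eqref{rev-type-1-PSL}--\eqref{rev-type-3-PSL} from \lemref{lem-conj}.
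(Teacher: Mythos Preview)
Your approach matches the paper's: explicit conjugators for each listed form (the paper takes $g=\begin{psmallmatrix}0&\jb\\\jb&0\end{psmallmatrix}$ for \eqref{rev-type-1-PSL}--\eqref{rev-type-2-PSL} and $g=\begin{psmallmatrix}\ib&0\\0&-\ib\end{psmallmatrix}$ for \eqref{rev-type-3-PSL}), and for the converse a Jordan-form comparison via \lemref{lem-conj} together with the observation that for a unique complex representative $\lambda$ one has $[\lambda]=[-\lambda^{-1}]$ iff $\lambda=\pm\ib$. The one adjustment worth making is that the paper deliberately chooses all three conjugators to be \emph{skew}-involutions, which it immediately leverages in \thmref{thm-rev-PSL-2-skew-inv-prod}; your involution $\mathrm{diag}(1,-1)$ for case \eqref{rev-type-3-PSL} is perfectly valid for this lemma but would need replacing at that later step.
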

	
	\begin{proof}
		For $A$ of types~\ref{rev-type-1-PSL}--\ref{rev-type-3-PSL}, consider the following matrices $g\in \mathrm{SL}(2,\mathbb{H})$, respectively:
		\[
		\begin{array}{lll}
			\text{(i)}~
			g := \begin{pmatrix} 0 & \jb \\ \jb & 0 \end{pmatrix}, & \quad
			\text{(ii)}~
			g := \begin{pmatrix} 0 & \jb \\ \jb & 0 \end{pmatrix}, & \quad
			\text{(iii)}~
			g := \begin{pmatrix} \ib & 0 \\ 0 & -\ib \end{pmatrix}.
		\end{array}
		\]
		In each case, one checks that $gAg^{-1}=-A^{-1}$. 
		Moreover, $g$ is a skew-involution in each case.
		
		Conversely, recall that $\jb z=\bar z\,\jb$ for all $z\in \C$. 
		For the unique complex representative $\lambda\in \C$ of an eigenvalue class of $A$, the numbers $\lambda$ and $\lambda^{-1}$ are conjugate if and only if $\lambda=\pm \ib$. 
		Also, $A$ is conjugate to $-A^{-1}$ if and only if $A$ and $-A^{-1}$ have the same Jordan form. 
		Therefore, if $A$ is conjugate to $-A^{-1}$ in $\mathrm{SL}(2,\mathbb{H})$, then Lemma~\ref{lem-conj} implies that $A$ is conjugate to one of the matrices listed in~\ref{rev-type-1-PSL}--\ref{rev-type-3-PSL}.
	\end{proof}
	
	\begin{proposition}\label{prop-rev-PSL-2-skew-inv-prod}
		Every element $A\in \mathrm{SL}(2,\mathbb{H})$ that is conjugate to $-A^{-1}$ can be written as a product of an involution and a skew-involution in $\mathrm{SL}(2,\mathbb{H})$.
	\end{proposition}
	
	\begin{proof}
		By Lemma~\ref{lem-rev-PSL-2}, we may assume that there exists a skew-involution $g\in \mathrm{SL}(2,\mathbb{H})$ such that $gAg^{-1}=-A^{-1}$. 
		Then
		\[
		A=\big(-g^{-1}A^{-1}\big)\,g,
		\]
		and
		\[
		\big(-g^{-1}A^{-1}\big)^2
		=(g^{-1}A^{-1}g^{-1})A^{-1}
		=A\,A^{-1}
		=\mathrm{I}_2.
		\]
		Hence $-g^{-1}A^{-1}$ is an involution, and the result follows.
	\end{proof}
	
	The following result shows that every reversible element in $\mathrm{PSL}(2,\mathbb{H})$ is strongly reversible.
	
	\begin{proposition}[cf.~{\cite{LFS}}]\label{prop-equiv-rev-str}
		An element $[A] \in \mathrm{PSL}(2,\H)$ is reversible if and only if it is strongly reversible.
	\end{proposition}

	\begin{proof}
		The implication $\Longleftarrow$ is immediate, so we prove the $\Longrightarrow$ direction. 
		Every involution and skew-involution in $\mathrm{SL}(2,\mathbb{H})$ projects to an involution in $\mathrm{PSL}(2,\mathbb{H})$. 
		Thus, by Propositions~\ref{prop-rev-prod-skew-inv} and~\ref{prop-rev-PSL-2-skew-inv-prod}, every reversible element of $\mathrm{PSL}(2,\mathbb{H})$ is a product of two involutions in $\mathrm{PSL}(2,\mathbb{H})$. 
		Hence every reversible element of $\mathrm{PSL}(2,\mathbb{H})$ is strongly reversible.
	\end{proof}
	
	We now prove our main result.
	
	\subsection{Proof of Theorem~\ref{thm-rev-alg-char}}
	
	Let $A\in \mathrm{SL}(2,\mathbb{H})$ be a lift of $[A]\in \mathrm{PSL}(2,\mathbb{H})$. 
	By~\eqref{eq-char-GL}, the characteristic polynomial of $A$ is
	\[
	\chi_{\H}(A)=x^4-c_3x^3+c_2x^2-c_1x+1,
	\]
	where $c_1,c_2,c_3\in \R$, and by~\eqref{eq-char-GL-relation} we have
	\[
	c_3=2\delta_A,\qquad c_2=\gamma_A,\qquad c_1=2\beta_A.
	\]
	Since $c_3=2\delta_A$ and $c_1=2\beta_A$, the condition $c_1=\pm c_3$ is equivalent to $\beta_A^2=\delta_A^2$. 
	Recall that $[A]\in \mathrm{PSL}(2,\mathbb{H})$ is reversible if and only if $A$ is conjugate to $\pm A^{-1}$ in $\mathrm{SL}(2,\mathbb{H})$. 
	Therefore, using~\eqref{eq-char-inverse}, if $[A]$ is reversible, then $c_1=\pm c_3$, which is equivalent to $\beta_A^2=\delta_A^2$.
	
	Conversely, assume that $\beta_A^2=\delta_A^2$, equivalently $c_1=\pm c_3$. 
	Suppose for contradiction that $[A]$ is not reversible in $\mathrm{PSL}(2,\H)$. 
	By Lemma~\ref{lem-conj} and Remark~\ref{rem-non-rev}, we have $c_1\neq c_3$ if and only if $A$ is not reversible in $\mathrm{SL}(2,\H)$, and hence $A$ is conjugate to
	\begin{equation}\label{eq-1}
		\begin{pmatrix}
			r e^{\ib \theta} & 0\\
			0 & r^{-1} e^{\ib \phi}
		\end{pmatrix},
	\end{equation}
	where $r\in \R$, $r>0$, and $\theta,\phi\in [0,\pi]$ satisfy $r\neq 1$ and $\theta\neq \phi$. 
	Moreover, by Lemmas~\ref{lem-conj} and~\ref{lem-rev-PSL-2}, we have $c_1\neq -c_3$ if and only if $A$ is not conjugate to $-A^{-1}$ in $\mathrm{SL}(2,\H)$. 
	Since $[A]$ is not reversible in $\mathrm{PSL}(2,\H)$, we obtain $c_1\neq \pm c_3$, contradicting the assumption. 
	Therefore, if $\beta_A^2=\delta_A^2$, then $[A]\in \mathrm{PSL}(2,\mathbb{H})$ is reversible.
	\qed
	
	The following remark follows from the proof of Theorem~\ref{thm-rev-alg-char}.
	
	\begin{remark}
		If $A$ is the matrix in~\eqref{eq-1} with $\theta=\pi-\phi$ (equivalently, $c_1=-c_3$), then Lemma~\ref{lem-rev-PSL-2}(2) implies that $[A]$ is reversible in $\mathrm{PSL}(2,\H)$. 
		Therefore, by Theorem~\ref{thm-rev-alg-char}, the following are equivalent:
		\begin{enumerate}[label={\normalfont(\roman*)}]
			\item $[A]$ is not reversible in $\mathrm{PSL}(2,\H)$;
			\item $c_1\neq \pm c_3$;
			\item $A$ is conjugate in $\mathrm{SL}(2,\H)$ to
			\[
			\begin{pmatrix}
				r e^{\ib \theta} & 0\\
				0 & r^{-1} e^{\ib \phi}
			\end{pmatrix},
			\]
			where $r\in \R$, $r>0$, $r\neq 1$, $\theta,\phi\in [0,\pi]$, $\theta\neq \phi$, and $\theta\neq \pi-\phi$.\qed
		\end{enumerate}
	\end{remark}
	
	\subsection{Reversing symmetries and involutions in $\mathrm{PSL}(2,\mathbb{H})$}\label{sec-inv-skew}
	
	In Lemma~\ref{lem-reverser}, we classified the reversing symmetries of reversible elements in $\mathrm{SL}(2,\mathbb{H})$. 
	Now, for $A\in \mathrm{SL}(2,\mathbb{H})$, define
	\begin{equation}\label{eq-def-S-set}
		\mathcal{S}_{\mathrm{SL}(2, \, \mathbb{H})}(A)
		:=
		\left\{
		g\in \mathrm{SL}(2,\mathbb{H})
		\ \middle|\
		gAg^{-1}=-A^{-1}
		\right\}.
	\end{equation}
	If $\mathcal{S}_{\mathrm{SL}(2, \, \mathbb{H})}(A)\neq \emptyset$, then $A$ is not reversible in $\mathrm{SL}(2,\mathbb{H})$, but $[A]$ is reversible in $\mathrm{PSL}(2,\mathbb{H})$. 
	The following lemma describes $\mathcal{S}_{\mathrm{SL}(2, \, \mathbb{H})}(A)$ when it is nonempty. 
	Together with Lemma~\ref{lem-reverser}, it yields a complete description of reversing symmetries in $\mathrm{PSL}(2,\mathbb{H})$.
	
	\begin{lemma}\label{lem-reverser-PSL}
		Let $A\in \mathrm{SL}(2,\mathbb{H})$ be one of the matrices in~\ref{rev-type-1-PSL}--\ref{rev-type-3-PSL} of Lemma~\ref{lem-rev-PSL-2}, and assume that $\mathcal{S}_{\mathrm{SL}(2, \, \mathbb{H})}(A)\neq \emptyset$. 
		Then $\mathcal{S}_{\mathrm{SL}(2, \, \mathbb{H})}(A)$ is given as follows in each case:
		\begin{enumerate}[label={\normalfont(\arabic*)}]
			\item If $\theta\neq \pi/2$, then
			$
			\mathcal{S}_{\mathrm{SL}(2, \, \mathbb{H})}(A)
			=
			\left\{
			\begin{pmatrix}
				0 & a\jb\\
				b\jb & 0
			\end{pmatrix}
			\ \middle|\
			a,b\in \mathcal{Z}(e^{\ib \theta})
			\right\}.
			$
			If $\theta=\pi/2$, then
			$
			\mathcal{S}_{\mathrm{SL}(2, \, \mathbb{H})}(A)
			=
			\left\{
			\begin{pmatrix}
				a & b\\
				c & d
			\end{pmatrix}
			\ \middle|\
			a,d\in \C,\ \ b,c\in \C\jb
			\right\}.
			$
			
			\item
			$
			\mathcal{S}_{\mathrm{SL}(2, \, \mathbb{H})}(A)
			=
			\left\{
			\begin{pmatrix}
				0 & a\jb\\
				b\jb & 0
			\end{pmatrix}
			\ \middle|\
			a,b\in \mathcal{Z}(e^{\ib \theta})
			\right\}.
			$
			
			\item
			$
			\mathcal{S}_{\mathrm{SL}(2, \, \mathbb{H})}(A)
			=
			\left\{
			\begin{pmatrix}
				a & b\\
				0 & -a
			\end{pmatrix}
			\ \middle|\
			a,b\in \C
			\right\}.
			$
		\end{enumerate}
	\end{lemma}
	
	\begin{proof}
		We omit the proof, as it follows by arguments similar to those in the proof of Lemma~\ref{lem-reverser}.
	\end{proof}
	
	We conclude this section with the following characterization of involutions in $\mathrm{PSL}(2,\mathbb{H})$.
	
	\begin{lemma}\label{lem-inv-PSL}
		Let $[A]$ be an involution in $\mathrm{PSL}(2,\mathbb{H})$ such that $A\neq \pm \mathrm{I}_2$. 
		Then $A$ is conjugate in $\mathrm{SL}(2,\mathbb{H})$ to one of the following matrices:
		\[
		\begin{pmatrix}
			1 & 0\\
			0 & -1
		\end{pmatrix},
		\qquad
		\begin{pmatrix}
			0 & 1\\
			-1 & 0
		\end{pmatrix}.
		\]
	\end{lemma}
	
	\begin{proof}
		Since $[A]\in \mathrm{PSL}(2,\mathbb{H})$ is an involution, we have $A^2=\pm \mathrm{I}_2$. 
		If $A^2=\mathrm{I}_2$, then $A$ is strongly reversible in $\mathrm{SL}(2,\mathbb{H})$. 
		Hence Lemma~\ref{lem-classification-str-rev-SL} implies that $A$ is conjugate to
		$\begin{pmatrix} 1 & 0\\ 0 & -1 \end{pmatrix}$. 
		If $A^2=-\mathrm{I}_2$, then $A=-A^{-1}$, and Lemma~\ref{lem-rev-PSL-2} implies that $A$ is conjugate to
		$\begin{pmatrix} 0 & 1\\ -1 & 0 \end{pmatrix}$ in $\mathrm{SL}(2,\mathbb{H})$.
	\end{proof}

	\bigskip

	\textbf{Acknowledgements.} The authors thank the anonymous referee for a careful reading of the manuscript and for valuable suggestions that improved its presentation. 
	
	Gongopadhyay acknowledges partial support from ANRF grants CRG/2022/003680 and ANRF/ARGM/2025/000122/MTR. Lohan acknowledges support from postdoctoral fellowships at IIT Kanpur and ISI Delhi. Mukherjee acknowledges partial support from NBHM grant Ref. No. 02011/35/2023/NBHM(R.P)/R\&DII/168877.

\end{document}